\newtheorem{Theorem}{Theorem}[section]
\newtheorem{Lemma}[Theorem]{Lemma}
\newtheorem{Corollary}[Theorem]{Corollary}
\newtheorem{Proposition}[Theorem]{Proposition}
\newtheorem{Remark}[Theorem]{Remark}
\newtheorem{Definition}[Theorem]{Definition}
\begin{document}
\title{A classification of Newton polygons of $L$-functions on
polynomials\footnote{This research was supported by the National Center for Mathematics and Interdisciplinary Sciences, CAS}}
\author{Fusheng Leng\footnote{communication author: leng@amss.ac.cn}, Banghe Li\\                     
        { Key Laboratory of Mathematics Mechanization, } \\
         { Academy of Mathematics and Systems Sciences, CAS}}
\maketitle                           

\begin{abstract}
\noindent 
Considering the $L$-function of exponential sums associated to a
polynomial over a finite field $\mathbb{F}_{q}$, Deligne proved that a reciprocal root's $p$-adic order is a rational number in the interval $[0, 1]$. Based on hypergeometric theory, in this paper we improve this result that there are only finitely many possible forms of Newton polygons for the
$L$-function of degree $d$ polynomials independent of p, when $p$ is larger than a constant $D^{*}$(Theorem \ref{theorem 4.3}), i.e., a reciprocal root's $p$-adic order has form $\frac{up-v}{D^{*}(p-1)}$ in which $u, v$ have finitely many possible values. Furthermore, when $p>D^{*}$, to determine the Newton polygon is only to determine it for any two specified primes $p_{1}, p_{2}>D^{*}$ in the same residue class of $D^{*}$(Theorem \ref{theorem 4.5}).
\end{abstract}

\section{Introduction}
\indent   
Let $\mathbb{F}_{q}$ ($q=p^{m}$) be the finite field of $q$ elements
with characteristic $p$ and $\mathbb{F}_{q^{r}}$ be the extension of
$\mathbb{F}_{q}$ of degree $r$. Let $\zeta_{p}$ be a fixed primitive
$p$-th root of unity in the complex numbers. For any Laurent
polynomial $f(x_{1},\dots,x_{n})\in
\mathbb{F}_{q}[x_{1},x_{1}^{-1},\dots,x_{n},x_{n}^{-1}]$, we form
the exponential sum
$$
S_{r}(f)=\sum_{x_{i}\in\mathbb{F}_{q^{r}}}\zeta_{p}^{\textrm{Tr}_{\mathbb{F}_{q^{r}}/\mathbb{F}_{q}}(f(x_{1},\cdots,x_{n}))}.
$$
The $L$-function is defined by
$$
L(f,T) = \textrm{exp}(\sum_{r=1}^{\infty}S_{r}(f)\frac{T^{r}}{r}) .
$$

Consider the case $n=1$, and $f$ is a polynomial with degree $d<p$.
It follows from Weil's work on the Riemann hypothesis for
function fields with characteristic $p$ that this $L$-function is
actually a polynomial of degree $d-1$. We can write it as

$$
L(f,T) = (1-\omega_{1}T)\cdots (1-\omega_{d-1}T).
$$

Another work on it of Weil is that the reciprocal roots $\omega_{1},
\cdots \omega_{d-1}$ are \textit{q-Weil numbers of weight 1}, i.e.,
algebraic integers, all of whose conjugates have complex absolute
$q^{\frac{1}{2}}$. Moreover, for any prime $l\neq p$, they are
$l$-adic units, that is, $|\omega_{i}|_{l}=1$, while
$|\omega_{i}|_{p}=q^{r_{i}}$ with $r_{i}\in \mathbb{Q} \cap [0,1]$. Deligne proved in general that $|\omega_{i}|=q^{\frac{u_{i}}{2}}$ with $u_{i}\in \mathbb{Z}\cap [0, 2n]$, and $|\omega_{i}|_{p}=q^{-r_{i}}$ with $r_{i}\in \mathbb{Q}\cap [0, n]$ (\cite{3} and \cite{10}).

A natural question is how to determine their $q$-adic absolute value, or
equivalently to determine $r_{i}$. In other words, one would like to
determine the Newton polygon $NP_{q}(f)$ of $L(f,T)$ where $NP_{q}$
means the Newton polygon taken with respect to the valuation
$\mathbf{ord}_{q}$ normalized by $\mathbf{ord}_{q}q=1$(cf. \cite{4},
Chapter IV for the link between the Newton polygon of a polynomial
and the valuations of its roots). There is an elegant general answer
to this problem when $p \equiv 1 \ \mathbf{mod} \ d$, $p\geq 5$:
the Newton polygon $NP_{q}(f)$ has vertices(cf. \cite{5} Theorem
7.5)

$$
(n, \frac{n(n+1)}{2d})_{1\leq n\leq d-1}
$$
This polygon is often called the \textit{Hodge polygon} for
polynomials of degree $d$, and denoted by $HP(d)$. We know that
$NP_{q}(f)$ lies above $HP(d)$.

Unfortunately, for general prime $p$, there is no such exact answer
of $NP_{q}(f)$. $NP_{q}(f)$ has been only determined for polynomials with degree 3, 4,
6(\cite{2}, \cite{6} and \cite{7} respectively), in which the case of degree 6 has some essential mistakes pointed by us in \cite{13}, via comparing with our results. In \cite{13}, we gave another method to calculate $NP_{q}(f)$ for polynomials with degree 5, 6, and the first $[\sqrt{2d}]+2$ slopes in general for $q=p$.

Another result on $NP_{q}(f)$ is Hui Zhu's result(\cite{8} and \cite{9}), concerning the one-dimensional case of Wan's conjecture \cite{10} as follows. Let the polygon
$NP_{q}(f)$ be the Newton polygon obtained from the reduction of $f$
modulo a prime above $p$ in the field defined by the coefficients of
$f$. Then there is a Zariski dense open subset $\mathcal{U}$ defined
over $\mathbb{Q}$ in the space of polynomial of degree $d$ such
that, for every $f$ in $\mathcal{U}(\bar{\mathbb{Q}})$, we have
$\lim_{p\rightarrow \infty} NP_{q}(f)=HP(d)$.

The Grothendieck specialization theorem implies
that, in the case of dimension one, for $p>d$, the lowest
Newton polygon

$$
GNP(d,p)=\inf NP_{q}(f)
$$
exists when $f$ runs over polynomials of degree $d$ over $\mathbb{F}_{q}$, which is called \textit{generic Newton polygon}.

Blache and F\'{e}rard determined explicitly both the generic polygon $GNP(d,p)$
and the associated \textit{Hasse polynomial} $H_{d,p}$ for $p\geq 3d$ \cite{1}.

Because of Deligne's work, when expanding $L(f,T)=\det(\mathbf{I}_{d-1}-T\Gamma^{\tau^{m-1}}\cdots \Gamma))$(cf.\cite{1}, Proposition 1.1) directly, we can only consider the items whose $p$-adic orders are smaller than a fixed number. A fact that this partial algebraic sum can be expressed as a finite linear combination of hypergeometric polynomials will be proved in section 3 ( Lemma \ref{lemma 3.2} ). Combining with H.S.Wilf and D.Zeilberger's work \cite{12}, we improved Deligne's work that there are only finitely many possible forms of Newton polygons independent of $p$, when $p$ is larger than a constant $D^{*}$ ( Theorem \ref{theorem 4.3} ).

Furthermore, to
determine $NP_{q}(f)$ for $p>D^{*}$, we need only to determine it for any two fixed primes
$p_{1}, p_{2}>D^{*}$ in the same residue class of $p$ modular $D^{*}$ ( Theorem \ref{theorem 4.5} ).

We use $p$-adic cohomology of Dwork, Robba and
others, especially Blache and F\'{e}rard's detailed description in their
paper \cite{1}.

The rest of the paper is organized as below: In Section \ref{section 2.1} we introduce some preliminaries on hypergeometric and holonomic functions, which are needed for proving our theorems in Section \ref{section 4}. We also introduce some concepts and results in Section \ref{section 2.2} as basis of our work. In Section \ref{section 3} we expand $L(f,T)$ in detail. Based on Lemma \ref{lemma 3.2}, we will prove our main theorem in Section \ref{section 4}.
\begin{center}
\end{center}
\textbf{Acknowledgements} We are indebted to Professor Daqing Wan for introducing us into this field, and for his continuous guide and encouragement to our works.

\section{Preliminary}

\subsection{Hypergeometric and holonomic functions}\label{section 2.1}

In this section, we introduce the so called Fundamental Corollary in theory of hypergeometric and holonomic functions needed for us \cite{12}.

\begin{Definition}
A function $F(a_{1}, \cdots , a_{m})$ of $m$ discrete variables is a hypergeometric term if for every $a_{i}$,

$$
\frac{E_{a_{i}}F}{F}=\frac{P_{i}}{Q_{i}}, i=1, \cdots , m.
$$
where $P_{i}$, $Q_{i}$ are all polynomials in the variables $(a_{1}, \cdots , a_{m})$, and $E_{a}$ are the shift operators acting on functions $f(a, \textbf{b})$ by changing $a$ to $a+1$, i.e.

$$
E_{a}f(a, \mathbf{b})=f(a+1, \mathbf{b}).
$$

\end{Definition}

Phrased otherwise, $F$ is a solution of the system of linear recurrence equations

$$
(Q_{i}E_{a_{i}}-P_{i})F=0, i=1, \cdots , m.
$$
If the dimention of the space of solutions of that system is $finite$, the functions $F$ are called $holonomic$.

In \cite{12} it is shown how to check for holonomicity, and in particular it is proved that the following class of $proper-hypergeometric functions$ are holonomic (we omit the continuous variables part here).

\begin{Definition}
A term $F(a_{1}, \cdots , a_{m})$ of $m$ discrete variables is proper-hypergeometric, if it has the form

$$
P(a_{1}, \cdots , a_{m})\prod_{i=1}^{I}(e_{0}^{(i)}+\sum_{j=1}^{m}e_{j}^{(i)}a_{j})!^{g_{i}}
$$
where $P(a_{1}, \cdots , a_{m})$ is a polynormial and $e_{j}^{(i)}$ and $g_{i}$ are integers.

\end{Definition}

We need two more concepts for leading to the Fundamental Corollary.

\begin{Definition}
A function $F(a_{1}, \cdots , a_{m})$ vanishes at infinity if for every variable $a_{i}$,

$$
\lim_{|a_{i}|\rightarrow \infty}F(\mathbf{a})=0.
$$

\end{Definition}

\begin{Definition}
An integral-sum

$$
g(\mathbf{n}):=\sum_{\mathbf{k}}F(\mathbf{n},\mathbf{k})
$$
is pointwise trivially evaluable, if for every specific specialization of the auxiliary variables (parameters) $\mathbf{n}$ there is an algorithm that will evaluate it.
\end{Definition}

\begin{Corollary}
(\textbf{Fundamental Corollary})
Let $F(n,\mathbf{k})$ be hypergeometric and holonomic(both hold if it is proper-hypergeometric) in the discrete variables $n$ and $\mathbf{k}$, and vanishes at infinity for every fixed $n$, then

$$
f(n):= \sum_{\mathbf{k}}F(n,\mathbf{k})
$$
satisfies a linear recurrence equation with polynomial coefficients
$$
P(N,n)f(n)=0.
$$
where $N$ is the shift operators acting on functions by changing $n$ to $n+1$.

\end{Corollary}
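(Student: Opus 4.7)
The plan is to prove the conclusion by the method of creative telescoping, which reduces the question to producing a recurrence that simultaneously encodes a shift relation in $n$ and a telescoping relation in $\mathbf{k}$. Concretely, I would first try to exhibit nonzero polynomials $a_0(n), \ldots, a_J(n)$ and hypergeometric terms $G_1, \ldots, G_s$ (the telescoping \emph{certificates}) satisfying an identity of the form
$$\sum_{j=0}^{J} a_j(n)\, F(n+j, \mathbf{k}) \;=\; \sum_{i=1}^{s} \bigl( G_i(n, \mathbf{k} + \mathbf{e}_i) - G_i(n, \mathbf{k}) \bigr),$$
where $\mathbf{e}_i$ is the unit shift vector in the $i$-th coordinate of $\mathbf{k}$. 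Granted such an identity, summing over $\mathbf{k}$ turns the left-hand side into $\sum_j a_j(n)\, f(n+j) = P(N, n) f(n)$, while the right-hand side telescopes. The vanishing of $F$ at infinity propagates to each $G_i$ (which by construction is a rational-function multiple of $F$) and forces the telescoped sum to be zero, yielding the desired recurrence $P(N, n) f(n) = 0$.

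The core step is the existence of the certificate identity above, and this is where holonomicity enters. I would work in the ring $\mathcal{R} = \mathbb{Q}(n, \mathbf{k})[E_n, E_{k_1}, \ldots, E_{k_s}]$ of shift operators with rational-function coefficients, and consider the left $\mathcal{R}$-module generated by $F$. Holonomicity says precisely that this module has finite dimension as a vector space over $\mathbb{Q}(n, \mathbf{k})$, equivalently that the annihilator ideal $\mathrm{Ann}_{\mathcal{R}}(F)$ is large enough. Taking $J$ larger than this dimension forces a nontrivial $\mathbb{Q}(n, \mathbf{k})$-linear dependence among the operators $E_n^0, E_n^1, \ldots, E_n^J$ modulo the submodule generated by the operators $E_{k_i} - 1$; equivalently, one produces an operator of the form $\sum_j a_j(n, \mathbf{k}) E_n^j - \sum_i (E_{k_i} - 1) R_i$ in $\mathrm{Ann}_{\mathcal{R}}(F)$, with $R_i \in \mathcal{R}$. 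After clearing denominators and exploiting the explicit hypergeometric form of $F$, I would arrange that the coefficients $a_j$ depend only on $n$, and then set $G_i := R_i \cdot F$, which is again hypergeometric.

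The remaining step is the routine verification that the summation over $\mathbf{k}$ is legitimate: the vanishing at infinity ensures both that $f(n) = \sum_{\mathbf{k}} F(n, \mathbf{k})$ is well defined (indeed essentially a finite sum in the proper-hypergeometric case, since factorials vanish outside a finite window) and that each iterated telescoping sum $\sum_{\mathbf{k}} \bigl( G_i(n, \mathbf{k} + \mathbf{e}_i) - G_i(n, \mathbf{k}) \bigr)$ collapses coordinate-by-coordinate to zero. The main obstacle I anticipate is the certificate existence in the previous paragraph: eliminating the $\mathbf{k}$-variables to obtain a pure recurrence in $n$ is a genuinely nontrivial elimination problem and is precisely the content of the multivariate Zeilberger-type algorithms, relying in an essential way on the finite-dimensionality of the holonomic quotient. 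Without that finiteness one could not expect a universal operator $P(N, n)$ independent of $\mathbf{k}$ to exist.
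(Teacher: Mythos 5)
You should be aware that the paper does not prove this statement: the Fundamental Corollary is imported verbatim from Zeilberger's holonomic-systems paper (reference \cite{12}), which the authors cite both for the statement and for the explicit proper-hypergeometric algorithm reported immediately after it. Your task is therefore really a blind reconstruction of Zeilberger's theorem, and the creative-telescoping outline you give is the right strategy at the top level: the certificate identity, the summation-and-telescoping step, and the role of vanishing at infinity are all faithful to the Wilf--Zeilberger method.

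The gap is in your certificate-existence argument, and it is genuine. Your dimension count lives in the wrong module. Finite-dimensionality of $\mathcal{R}\cdot F$ over $\mathbb{Q}(n,\mathbf{k})$ is cheap: for a hypergeometric term every $E_n^j F$ and $E_{k_i}^j F$ is already a rational-function multiple of $F$, so $\mathcal{R}\cdot F$ is one-dimensional over $\mathbb{Q}(n,\mathbf{k})$ and the quotient by $\sum_i(E_{k_i}-1)\mathcal{R}\cdot F$ gives no leverage. The dependence you extract, $\sum_j a_j(n,\mathbf{k})\,E_n^j F \in \sum_i (E_{k_i}-1)\mathcal{R}\cdot F$, has coefficients $a_j$ that a priori depend on $\mathbf{k}$; if they genuinely do, summing over $\mathbf{k}$ does not collapse the left-hand side to $\sum_j a_j(n)\,f(n+j)$, since the coefficients stay trapped inside the sum, and no recurrence for $f$ results. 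Your proposed remedy of ``clearing denominators and exploiting the explicit hypergeometric form'' cannot by itself strip out a real $\mathbf{k}$-dependence. What is actually needed is that the telescoping quotient $(\mathcal{R}\cdot F)\big/\sum_i(E_{k_i}-1)(\mathcal{R}\cdot F)$ be finite-dimensional over $\mathbb{Q}(n)$, and this is the content of the nontrivial theorem that summation over a variable preserves holonomicity, proved via the Bernstein filtration and a dimension bound on the characteristic variety (in the Ore-algebra / discrete $D$-module sense), not via the rank over $\mathbb{Q}(n,\mathbf{k})$. You correctly locate the obstruction in your final paragraph, but the dimension argument you actually offer does not resolve it.
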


H.S.Wilf and D.Zeilberger gave a method to find such $P(N,n)$, and gave an effective upper bound for the $N$-degree of $P(N,n)$ \cite{12}. More detailed, assume

$$
F(n,\mathbf{k})=R(n,\mathbf{k})\frac{\prod_{s=1}^{p}(a_{s}n+\mathbf{b}_{s}\cdot \mathbf{k}+c_{s})!}{\prod_{s=1}^{q}(u_{s}n+\mathbf{v}_{s}\cdot \mathbf{k}+w_{s})!}
$$
where $\mathbf{k}=(k_{1}, \cdots , k_{r})$, $R$ is a polynomial, the $a$'s, $u$'s, $\mathbf{b}$'s and $\mathbf{v}$'s are integers that contain no additional parameters, and the $c$'s
and $w$'s are integers that may involve unspecified parameters (in our case the prime $p$ is such parameter in $c$'s and $w$'s ).

Let
$$
f_{n}(\mathbf{x})=\sum_{\mathbf{k}}F(n,\mathbf{k})\mathbf{x^{k}}.
$$
Then there exist polynomials $\alpha_{j}(n,\mathbf{x})$ independent of $\mathbf{k}$ satisfying
$$
\sum_{j=0}^{J}\alpha_{j}(n,\mathbf{x})f_{n-j}(\mathbf{x})=0
$$
where the coefficients
$$
\alpha_{j}(n,\mathbf{x})=\sum_{0\leq i_{1}, \cdots , i_{r} \leq I}\alpha_{\mathbf{i},j}(n)\mathbf{x^{i}}
$$
and $I, J$ are both bounded by $a, \mathbf{b}, u, \mathbf{v}, deg(R)$.

\subsection{$p$-Adic differential operators and exponential sums}\label{section 2.2}

Let $f(x):=\alpha_{d}x^{d}+\cdots +\alpha_{1}x$, $\alpha_{d}\neq 0$,
be a polynomial of degree $d<p$, over the field $\mathbb{F}_{q}$, and let $g(x):=a_{d}x^{d}+\cdots +a_{1}x \in \mathcal{O}_{m}[x]$ be the polynomial whose coefficients are the Teichm$\ddot{u}$ller lifts of those of $f$. Let $A:=B(0, 1^{+})\backslash B(0, 1^{-})$. We consider the space $\mathcal{H}^{\dag}(A)$ of overconvergent analytic functions on $A$. Define the function $H:=\exp(\pi g(x))$ and let $D$ be the differential operator
$$
D:=x\frac{d}{dx}+\pi xg'(x) \ \ \ (= H^{-1}\circ x\frac{d}{dx}\circ H).
$$

Then for every $n\in \mathbb{Z}$, $x^{n}$ can be written uniquely as

$$
x^{n}\equiv \sum_{i=0}^{d-1}a_{ni}x^{i} \ \mathbf{mod} \ D\mathcal{H}^{\dag}(A)
$$
for some $a_{ni}\in \mathcal{K}_{m}(\pi), 0\leq i\leq d-1$, where $\mathcal{K}_{m}$ is an unramified extension of degree $m$ of the $p$-adic numbers field $\mathbb{Q}_{p}$.

We define the power series $\theta(x):=\exp(\pi x-\pi x^{p})$ and $F(x):=\prod_{i=1}^{d}\theta(a_{i}x^{i}):=\sum_{n\geq 0}h_{n}x^{n}$. Define a mapping $\psi_{q}$ on $\mathcal{H}^{\dag}(A)$ by $\psi_{q}f(x):=\frac{1}{q}\sum_{z^{q}=x}f(z)$. Let $\beta$ be the endomorphism of $\mathcal{H}^{\dag}(A)$ defined by $\beta=\psi_{p}\circ F$; then $\tau^{-1}\circ \beta$($\tau$ being the Frobenius) commutes with $D$ up to a factor $p$, and passes to the quotient, giving an endomorphism $\tau^{-1}\circ \beta$ of $W$, the $\mathcal{K}_{m}(\zeta_{p})$-vector space with basis $\mathcal{B}$.

Let $M:=\textrm{Mat}_{\mathcal{B}}(\overline{\beta})$ be the matrix of $\overline{\beta}$ in the basis $\mathcal{B}$, and $m_{ij}$($0\leq i,j\leq d-1$) be the coefficients of $M$. From the description of $F$, we can write $m_{ij}=h_{pi-j}+\sum_{n\geq d}h_{np-j}a_{ni}$(cf. \cite{11}, 7.10). Define $\Gamma :=(m_{ij})_{1\leq i,j\leq d-1}$, then

$$
L(f,T)=\det(\mathbf{I}_{d-1}-T\Gamma^{\tau^{m-1}}\cdots \Gamma)
$$
(cf. \cite{1}, Proposition 1.1).

\section{Finite sum expression of $\mathcal{M}_{n}$}\label{section 3}

Let $f(x):=\alpha_{d}x^{d}+\cdots +\alpha_{1}x$, $\alpha_{d}\neq 0$,
be a polynomial of degree $d<p$, over the field $\mathbb{F}_{q}$. Denote $L(f,T)=1+\sum_{n=1}^{d-1}\mathcal{M}_{n}T^{n}$, our aim is to
determine every $\mathbf{ord}_{q}\mathcal{M}_{n}$.

We expand $\det(\mathbf{I}_{d-1}-T\Gamma^{\tau^{m-1}}\cdots \Gamma))$, i.e.

\begin{equation}\label{equation 1}
\mathcal{M}_{n}=\sum_{1\leq u_{1}<\cdots<u_{n}\leq d-1}\sum_{\sigma\in S_{n}}\textrm{sgn}(\sigma)\prod_{i=1}^{n}(\sum_{1\leq k_{1}, \cdots , k_{m-1}\leq d-1}m_{u_{i},k_{1}}^{\tau^{m-1}}m_{k_{1},k_{2}}^{\tau^{m-2}}\cdots m_{k_{m-1},u_{\sigma(i)}}).
\end{equation}

\subsection{Finite sum expression of $m_{ij}$}

Set $\theta(x):=\Sigma_{i\geq 0}b_{i}x^{i}$; then we have ( Lemma 1.2 in \cite{1} )

$(i)\ \mathbf{ord}_{p}b_{i}\geq \frac{i}{p-1} \ for \ 0\leq i\leq p^{2}-1;$

$(ii)\mathbf{ord}_{p}b_{i}\geq \frac{(p-1)i}{p^{2}} \ for \ i\geq p^{2}.$

Since
$$
h_{n}=\sum_{i_{1}+ \cdots + di_{d}= n}a_{1}^{i_{1}}\cdots a_{d}^{i_{d}}b_{i_{1}}\cdots b_{i_{d}},
$$
then

\begin{Lemma}\label{lemma 3.1}
We have the relations

$(i)\ \mathbf{ord}_{p}h_{i}\geq \frac{i}{d(p-1)} \ for \ 0\leq i\leq p^{2}-1;$

$(ii)\mathbf{ord}_{p}h_{i}\geq \frac{(p-1)i}{dp^{2}} \ for \ i\geq p^{2}.$
\end{Lemma}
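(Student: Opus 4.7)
The plan is to derive the bounds on $\mathbf{ord}_p h_i$ termwise from the expansion $h_n=\sum_{i_1+2i_2+\cdots+di_d=n}a_1^{i_1}\cdots a_d^{i_d}\,b_{i_1}\cdots b_{i_d}$, exploiting the ultrametric inequality together with the two bounds on $\mathbf{ord}_p b_i$ recalled just before the statement. First I would note that the $a_j$'s are Teichmüller lifts, hence $p$-adic integers, so $\mathbf{ord}_p(a_1^{i_1}\cdots a_d^{i_d})\geq 0$. Therefore
\begin{equation*}
\mathbf{ord}_p h_n \;\geq\; \min_{\substack{i_1+2i_2+\cdots+di_d=n\\ i_j\geq 0}} \sum_{j=1}^{d}\mathbf{ord}_p b_{i_j}.
\end{equation*}

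The key combinatorial observation to carry in mind throughout is that the constraint $i_1+2i_2+\cdots+di_d=n$ forces $\sum_{j=1}^d i_j \geq n/d$, since each weight is at most $d$. Combined with a per-factor bound of the shape $\mathbf{ord}_p b_{i_j}\geq C\cdot i_j$, this will yield exactly the factor $1/d$ appearing in both halves of the lemma.

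For part (i), any decomposition with $n\leq p^2-1$ has each $i_j\leq n\leq p^2-1$, so the first bound of Lemma 1.2 of \cite{1} applies uniformly and gives $\mathbf{ord}_p b_{i_j}\geq i_j/(p-1)$; summing and using the combinatorial inequality yields $\mathbf{ord}_p h_n\geq n/(d(p-1))$. For part (ii) the complication is that in a given partition some $i_j$ may exceed $p^2$ while others do not, so neither of the two bounds applies uniformly across all factors. The cleanest way out, which I would take, is to observe that $\frac{(p-1)}{p^2}\leq \frac{1}{p-1}$ (equivalent to $(p-1)^2\leq p^2$), so the universal lower bound $\mathbf{ord}_p b_{i_j}\geq (p-1)i_j/p^2$ holds for every $i_j\geq 0$; summing and again using $\sum_j i_j\geq n/d$ produces $\mathbf{ord}_p h_n\geq (p-1)n/(dp^2)$.

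The only real obstacle is the one just mentioned — reconciling the two regimes of the estimate on $b_i$ inside a single sum — and it dissolves once one notices the elementary inequality between the two bounds. No cancellation issue arises, as all contributions are bounded below in ordinary (real) sense and the ultrametric inequality respects the minimum.
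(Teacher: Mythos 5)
Your proposal is correct and follows essentially the same approach as the paper: apply the two termwise bounds on $\mathbf{ord}_p b_i$, use the ultrametric inequality, observe that $i_1+2i_2+\cdots+di_d=n$ forces $\sum_j i_j\geq n/d$, and for part (ii) unify the two regimes via the elementary inequality $\frac{p-1}{p^2}\leq\frac{1}{p-1}$.
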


\begin{proof}
For $0\leq i\leq p^{2}-1$ and $i_{1}+ \cdots + di_{d}=i$, we have $0\leq i_{1}, \cdots, i_{d}\leq p^{2}-1$ and

$$
\mathbf{ord}_{p}b_{i_{1}}\cdots b_{i_{d}}=\mathbf{ord}_{p}b_{i_{1}}+ \cdots + \mathbf{ord}_{p}b_{i_{d}}\geq \frac{i_{1}}{p-1}+ \cdots + \frac{i_{d}}{p-1}\geq \frac{i}{d(p-1)},
$$
then $(i)$ holds.

For $i\geq p^{2}$ and $i_{1}+ \cdots + di_{d}=i$, since $\frac{1}{p-1}\geq \frac{p-1}{p^{2}}$, we have

$$
\mathbf{ord}_{p}b_{i_{1}}\cdots b_{i_{d}}=\mathbf{ord}_{p}b_{i_{1}}+ \cdots + \mathbf{ord}_{p}b_{i_{d}}\geq \frac{(p-1)i_{1}}{p^{2}}+ \cdots + \frac{(p-1)i_{d}}{p^{2}}\geq \frac{(p-1)i}{dp^{2}},
$$
then $(ii)$ holds.
\end{proof}

Furthermore,

$$
h_{n}=\sum_{k\geq 0}\pi^{k}\sum_{\substack{\sum_{i=1}^{d}i(m_{i}+(p-1)n_{i})=n, \\ \sum_{i=1}^{d}m_{i}=k}}\prod_{i=1}^{d}\frac{(-1)^{n_{i}}}{m_{i}!}\left(
  \begin{array}{cc}
    m_{i}\\
    n_{i}\\
  \end{array}
\right)a_{i}^{m_{i}+(p-1)n_{i}},
$$

following the definition of $m_{ij}$ in Section \ref{section 2.2}, we have

\begin{equation}\label{equation 2}
m_{ij}=\sum_{r>0}h_{rp-j}a_{ri}=\sum_{r>0}a_{ri}\sum_{k\geq 0}\pi^{k}\sum_{\substack{\sum_{l=1}^{d}l(m_{l}+(p-1)n_{l})=rp-j, \\ \sum_{l=1}^{d}m_{l}=k}}\prod_{l=1}^{d}\frac{(-1)^{n_{l}}}{m_{l}!}\left(
  \begin{array}{cc}
    m_{l}\\
    n_{l}\\
  \end{array}
\right)a_{l}^{m_{l}+(p-1)n_{l}}.
\end{equation}

Since the $p$-adic order of the reciprocal roots of $L(f,T)$ are all smaller than 1, we can consider only the items in (\ref{equation 1}) whose $p$-adic orders are smaller than $n$. If $p>3$, then $\frac{(p-1)(rp-j)}{dp^{2}}-\frac{r-i}{d(p-1)}>n$ when $r\geq 3nd$ and $1\leq i, j\leq d-1$. Following Lemma 1.1 in \cite{1}, and equation (\ref{equation 1}) and Lemma \ref{lemma 3.1}, we should consider only the part $r<3nd$ in the sum of equation (\ref{equation 2}).

If $p\geq 3d^{2}$, then any $n_{l}$ in equation (\ref{equation 2}) must smaller than $3nd$ when $r<3nd$. We can rewrite $m_{ij}$ to

$$
m_{ij}=\sum_{r\geq 3nd}h_{rp-j}a_{ri}+
$$
\begin{equation}\label{equation 3}
\sum_{0<r<3nd}a_{ri}\sum_{0\leq n_{1}, \cdots , n_{d}<3nd}\sum_{k\geq 0}\pi^{k}\sum_{\substack{\sum_{l=1}^{d}l(m_{l}+(p-1)n_{l})=rp-j, \\ \sum_{l=1}^{d}m_{l}=k}}\prod_{l=1}^{d}\frac{(-1)^{n_{l}}}{m_{l}!}\left(
  \begin{array}{cc}
    m_{l}\\
    n_{l}\\
  \end{array}
\right)a_{l}^{m_{l}+(p-1)n_{l}}.
\end{equation}

\subsection{Finite sum expression of $\mathcal{M}_{n}$}

Let
$$
s_{l}=k-\sum_{t=1}^{l}m_{t}
$$
for $1\leq l\leq d-1$ and $s_{0}=k$ and $s_{d}=0$. Then
$$
m_{l}=s_{l-1}-s_{l}
$$
for $1\leq l\leq d$.

We transform (\ref{equation 3}) into

$$
m_{ij}=\sum_{r\geq 3nd}h_{rp-j}a_{ri}+
$$
$$
\sum_{0<r<3nd}a_{ri}\sum_{0\leq n_{1}, \cdots , n_{d}<3nd}\sum_{k\geq 0}\pi^{k}\sum_{\sum_{l=1}^{d}l(s_{l-1}-s_{l}+(p-1)n_{l})=rp-j}\prod_{l=1}^{d}\frac{(-1)^{n_{l}}}{(s_{l-1}-s_{l})!}\left(
  \begin{array}{cc}
    s_{l-1}-s_{l}\\
    n_{l}\\
  \end{array}
\right)a_{l}^{s_{l-1}-s_{l}+(p-1)n_{l}}.
$$
Note that the sum $\sum_{l=1}^{d}l(s_{l-1}-s_{l}+(p-1)n_{l})=rp-j$ is equivalent to

$$
s_{d-1}=rp-j-\sum_{l=1}^{d-1}s_{l-1}+\sum_{l=1}^{d}l(p-1)n_{l},
$$
we can omit $\sum_{\sum_{l=1}^{d}l(s_{l-1}-s_{l}+(p-1)n_{l})=rp-j}$ by substituting $s_{d-1}$ into the expression.		

Let
$$
\omega=\{\sigma, u_{i}, k_{j}, r_{i,j}, n_{i,j,l}\}_{i=1,\cdots ,n;j=1,\cdots ,m;l=1,\cdots ,d}
$$
where $0\leq n_{i,j,l}<3nd$.

Let $p_{i,j}=r_{i,j}p-k_{j}$ for $i=1,\cdots ,n;j=1,\cdots ,m-1$ and $p_{i,m}=r_{i,m}p-u_{\sigma(i)}$ for $i=1,\cdots ,n$.

Denote ($\{s_{i,j,0}=k_{i,j}$, $s_{i,j,d-1}=p_{i,j}-\sum_{l=1}^{d-1}s_{l-1}+\sum_{l=1}^{d}l(p-1)n_{i,j,l}$ and $s_{i,j,d}=0$)

$$
F_{\omega}(\{s_{i,j,0},s_{i,j,1},\cdots ,s_{i,j,d-2}\})=
$$
$$
\prod_{i=1,\cdots ,n;j=1,\cdots ,m}\prod_{l=1}^{d}\frac{(-1)^{n_{i,j,l}}}{(s_{i,j,l-1}-s_{i,j,l})!}\left(
  \begin{array}{cc}
    s_{i,j,l-1}-s_{i,j,l}\\
    n_{i,j,l}\\
  \end{array}
\right)a_{l}^{s_{i,j,l-1}-s_{i,j,l}+(p-1)n_{i,j,l}}.
$$

Then $F_{\omega}$ is hypergeometric and holonomic. Replacing $k_{n,m}$ with variable $0\leq a$ and $0<k\leq p-1$ such that $k_{n,m}=a(p-1)+k-\sum_{(i,j)\neq (n,m)}k_{i,j}$.

Furthermore, let

$$
G_{\omega,a}(k)=\sum_{s_{i,j,0},s_{i,j,1},\cdots ,s_{i,j,d-1}}F_{\omega}(\{s_{i,j,0},s_{i,j,1},\cdots ,s_{i,j,d-2}\})
$$

Then we have

$$
\mathcal{M}_{n}=\sum_{1\leq u_{1}<\cdots<u_{n}\leq d-1}\sum_{\sigma\in S_{n}}\textrm{sgn}(\sigma)\sum_{1\leq k_{1}, \cdots , k_{m-1}\leq d-1}\sum_{0<r_{i,j}<3nd}
$$
\begin{equation}
\prod_{1\leq i\leq n}a_{r_{i,1},u_{i}}^{\tau^{m-1}}\prod_{1\leq i\leq n,1<j\leq m}a_{r_{i,j},k_{j-1}}^{\tau^{m-j}}(\sum_{0\leq n_{i,j,1},\cdots , n_{i,j,d}<3nd}\sum_{0\leq a}(-p)^{a}\sum_{0<k\leq p-1}\pi^{k}G_{\omega,a}(k))+\mathcal{N}_{n}
\end{equation}
where the $p$-adic order of $\mathcal{N}_{n}$ is not smaller than $d$.

Since $k_{i,j}\leq r_{i,j}p-1<3ndp$, we have
$$
a(p-1)+k=\sum_{i,j}k_{i,j}<3dmn^{2}p,
$$
thus $a\leq 3dmn^{2}$. Therefore we can consider only

$$
\mathcal{\widetilde{M}}_{n}=\sum_{1\leq u_{1}<\cdots<u_{n}\leq d-1}\sum_{\sigma\in S_{n}}\textrm{sgn}(\sigma)\sum_{1\leq k_{1}, \cdots , k_{m-1}\leq d-1}\sum_{0<r_{i,j}<3nd}
$$
\begin{equation}
\prod_{1\leq i\leq n}a_{r_{i,j},u_{i}}^{\tau^{m-1}}\prod_{1\leq i\leq n,1<j\leq m}a_{r_{i,j},k_{j-1}}^{\tau^{m-j}}(\sum_{0\leq n_{i,j,1},\cdots , n_{i,j,d}<3nd}\sum_{0\leq a\leq 3dmn^{2}}(-p)^{a}\sum_{0<k\leq p-1}\pi^{k}G_{\omega,a}(k))
\end{equation}
instead of $\mathcal{M}_{n}$. We have

\begin{Lemma}\label{lemma 3.2}
$\mathbf{ord}_{p}\mathcal{M}_{n}$ is congruent with $\frac{c}{p-1}$ or some $\frac{c}{p-1}+\mathbf{ord}_{p}\sum_{0<k\leq p-1}\pi^{k}G_{\omega,a}(k)$ $(\mathbf{mod} \ 1)$, where $-3mn^{2}<c\leq 0$ is an integer independent of $p$.
\end{Lemma}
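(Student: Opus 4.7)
The plan is to extract the residue of $\mathbf{ord}_{p}\mathcal{M}_{n}$ modulo $1$ from the finite expansion $\widetilde{\mathcal{M}}_{n}$, summand by summand. First, I would argue that $\mathcal{M}_{n}$ and $\widetilde{\mathcal{M}}_{n}$ share the same residue mod $1$. The discrepancy between them collects the tail $r\geq 3nd$ in equation (\ref{equation 2}) (controlled by Lemma \ref{lemma 3.1}, which forces order $>n$) together with the remainder $\mathcal{N}_{n}$ of order at least $d$. Since $\mathbf{ord}_{p}\mathcal{M}_{n}<n$ in the relevant regime (any reciprocal root of $L(f,T)$ has $p$-adic order at most $1$), any such discrepancy is absorbed and does not affect the residue mod $1$.

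Next, I would dissect a single summand of $\widetilde{\mathcal{M}}_{n}$ indexed by the discrete data $(\omega,a)$. It factors as a signed product of Teichm\"uller lifts $a_{l}$ (all $p$-adic units), Dwork coefficients $a_{r_{i,j},\cdot}^{\tau^{\cdot}}\in\mathcal{K}_{m}(\pi)$, the scalar $(-p)^{a}$, and the inner sum $\sum_{0<k\leq p-1}\pi^{k}G_{\omega,a}(k)$. Because $\mathcal{K}_{m}(\pi)/\mathcal{K}_{m}$ is totally ramified of degree $p-1$ with uniformizer $\pi$, each Dwork coefficient has $p$-adic order in $\frac{1}{p-1}\mathbb{Z}$. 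The standard Dwork estimate underlying Lemma 1.1 of \cite{1}, combined with $r_{i,j}<3nd$, bounds the numerator of each such order by $3n$ in absolute value; multiplying the $nm$ factors in the summand yields a cumulative order $\frac{c'}{p-1}$ with $-3mn^{2}<c'\leq 0$. Since the integer $a$ from $(-p)^{a}$ vanishes mod $1$, the order of this summand reduces modulo $1$ to precisely $\frac{c}{p-1}+\mathbf{ord}_{p}\sum_{0<k\leq p-1}\pi^{k}G_{\omega,a}(k)\pmod{1}$, with $c=c'$ for $p>D^{*}\geq 3mn^{2}$, so that $c$ is a genuine residue in the required range.

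Finally, I would pass from individual summands to $\widetilde{\mathcal{M}}_{n}$ by the non-Archimedean principle: the total order is at least the minimum over summands, with equality when no cancellation occurs. In the non-cancelling case, the residue mod $1$ of $\mathbf{ord}_{p}\mathcal{M}_{n}$ coincides with that of some particular summand, giving the second alternative in the statement. The first alternative $\frac{c}{p-1}$ covers the degenerate situation in which the inner sum $\sum_{k}\pi^{k}G_{\omega,a}(k)$ is itself of order in $\frac{1}{p-1}\mathbb{Z}$ with bounded numerator (for instance when it is a unit), so that it is absorbable into $c$. The main obstacle is the cancellation case: several summands sharing the minimum order could cancel and force the true order to jump. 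I would handle this by grouping the finite list of summands according to their residue $\frac{c}{p-1}+\mathbf{ord}_{p}\sum_{k}\pi^{k}G_{\omega,a}(k)\pmod{1}$ and noting that cancellation within a class either preserves its residue (when the next subsum of the same order carries the same residue) or eliminates that class entirely; since the collection of admissible residues is finite by the bound on $c'$ and by the finite combinatorial range of $(\omega,a)$, the surviving residue of $\mathbf{ord}_{p}\mathcal{M}_{n}$ still lies in the claimed set.
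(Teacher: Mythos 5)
Your proof follows essentially the same route as the paper's: bound $\mathbf{ord}_p$ of the product of the Dwork coefficients $a_{r_{i,j},\cdot}$ by $\frac{z}{p-1}$ with $0\geq z>-3mn^{2}$ (since each of the $nm$ factors has order $\frac{s}{p-1}$ with $0\geq s>-3n$ by Lemma 1.1 of \cite{1} and $r<3nd$), observe that the sum in (\ref{equation 6}) has finitely many terms and the factor $(-p)^{a}$ contributes an integer, and then pass to $\mathbf{ord}_p\mathcal{M}_n$. You are more explicit than the paper about absorbing the tails $\sum_{r\geq 3nd}h_{rp-j}a_{ri}$ and $\mathcal{N}_n$ (using Deligne's bound $\mathbf{ord}_p\mathcal{M}_n\leq n<d$), and about the non-Archimedean passage from the order of an individual summand to the order of the whole sum, which the paper compresses into the unjustified phrase ``In other words, $\mathbf{ord}_p\mathcal{M}_n$ is congruent with the $p$-adic order of one of the items.'' Your residue-class grouping argument for the cancellation case is not fully rigorous (when the minimal class cancels entirely, the surviving order is driven by subleading terms of the inner sums $\sum_k\pi^k G_{\omega,a}(k)$ and need not fall in another class), but this is a gap already present in the paper's own terse proof rather than something you introduced.
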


\begin{proof}
By Lemma 1.1. in \cite{1}, $0\geq \mathbf{ord}_{p}a_{r,i}:=\frac{s}{p-1}\geq -\frac{r-i}{d(p-1)}$. When we restrict $r$ to $r<3nd$, we have $0\geq s> -3n$.

For any $r_{i,j}$, $u_{i}$ and $k_{j}$, let $\mathbf{ord}_{p}\prod_{1\leq i\leq n}a_{r_{i,j},u_{i}}^{\tau^{m-1}}\prod_{1\leq i\leq n,1<j\leq m}a_{r_{i,j},k_{j-1}}^{\tau^{m-j}}:=\frac{z}{p-1}$. Then it is within the range
$$
0\geq \frac{z}{p-1}> -\frac{3n^{2}m}{p-1}.
$$

Besides,
$$
\sum_{1\leq u_{1}<\cdots<u_{n}\leq d-1}\sum_{\sigma\in S_{n}}\textrm{sgn}(\sigma)\sum_{1\leq k_{1}, \cdots , k_{m-1}\leq d-1}
$$
\begin{equation}\label{equation 6}
\sum_{0<r_{i,j}< 3nd}\prod_{1\leq i\leq n}a_{r_{i,j},u_{i}}^{\tau^{m-1}}\prod_{1\leq i\leq n,1<j\leq m}a_{r_{i,j},k_{j-1}}^{\tau^{m-j}}(\sum_{0\leq n_{i,j,1},\cdots , n_{i,j,d}<3nd}\sum_{0\leq a\leq 3dmn^{2}}(-p)^{a})
\end{equation}
is a finite sum independent of $p$. In other words, $\mathbf{ord}_{p}\mathcal{M}_{n}$ is congruent with the $p$-adic order of one of the finitely many numbers items in the sum of (\ref{equation 6}) $(\mathbf{mod} \ 1)$.

The result follows.
\end{proof}

\section{Finite possible forms of Newton polygon}\label{section 4}

Since $F_{\omega}$ is hypergeometric and holonomic, following the Fundamental Corollary in holonomic theory, there exist a non-zero operator $P_{\omega,a}(K,k)$  ( a polynomial of $k$ and shift operator $K$ defined by $K\circ f(k)=f(k+1)$ ) annihilate $G_{\omega,a}(k)$, i.e.
$$
P_{\omega,a}(K,k)G_{\omega,a}(k)\equiv 0.
$$

By the definition of $G_{\omega,a}(k)$, for any prime $p$ we have
$$
\mathbf{ord}_{p}G_{\omega,a}(k)\equiv 0 (\mathbf{mod} \ 1)
$$

Suppose $P_{\omega,a}(K,k)=\sum_{i=0}^{e-1}K^{i}P_{i}(k)\in \mathbb{Z}[K,k]$, where $e$ is independent of $p$ ( see the end of Section \ref{section 2.1} ). Let $P_{i_{0}}, \cdots , P_{i_{h}}$ be all of such $P_{i}$ satisfying

$$
p \nmid P_{i_{j}}
$$
where $j=0, \cdots , r$ and $e-1\geq i_{0}> \cdots > i_{r}\geq 0$. ( If each of $P_{i}$ is divisible by $p$, we can divide by $p$ to reduce $P_{\omega,a}(K,k)$ until such $P_{i_{j}}$ appears. )

\begin{Proposition}\label{proposition 4.1}
If

$$
\mathbf{ord}_{p}\sum_{0<k\leq p-1}\pi^{k}G_{\omega,a}(k)\equiv \frac{k_{\omega,a}}{p-1} (\mathbf{mod} \ 1)
$$
where $e<k_{\omega,a}\leq p-e$, then

$$
P_{i_{0}}(k_{\omega,a})\equiv 0 (\mathbf{mod} \ p).
$$

\end{Proposition}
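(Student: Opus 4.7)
The plan is to identify $k_{\omega,a}$ with the unique index at which $\mathbf{ord}_p(\pi^{k}G_{\omega,a}(k))$ is minimal on $\{1,\dots,p-1\}$, and then extract the divisibility from the annihilating recurrence by an ultrametric leading-term argument. The integrality $\mathbf{ord}_p G_{\omega,a}(k)\in\mathbb{Z}$ recalled just before the statement, together with $\mathbf{ord}_p\pi=\frac{1}{p-1}$, makes the fractional parts $\frac{k}{p-1}\,(\mathbf{mod}\ 1)$ of the orders $\mathbf{ord}_p(\pi^k G_{\omega,a}(k))$ pairwise distinct as $k$ ranges over $\{1,\dots,p-1\}$. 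Hence these individual orders are pairwise distinct, the ultrametric inequality forces $\mathbf{ord}_p\sum_{k}\pi^k G_{\omega,a}(k)$ to equal their unique minimum, and the assumption that this order is $\equiv \frac{k_{\omega,a}}{p-1}\,(\mathbf{mod}\ 1)$ pins down the minimizer as $k=k_{\omega,a}$.

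Next I would evaluate the recurrence $\sum_{i=0}^{e-1}P_i(k)G_{\omega,a}(k+i)=0$ at the shift that places $G_{\omega,a}(k_{\omega,a})$ in the $i_0$-th slot, namely $k=k_{\omega,a}-i_0$. The two-sided bound $e<k_{\omega,a}\le p-e$ is tailored precisely so that every shifted argument $k_{\omega,a}+(i-i_0)$ for $0\le i\le e-1$ lies in $\{1,\dots,p-1\}$, where the uniqueness from the first step applies. This isolates $P_{i_0}(k_{\omega,a}-i_0)\,G_{\omega,a}(k_{\omega,a})$ as the leading term of the identity, with every other term now to be estimated against it.

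The core estimate is that every non-leading term has $p$-adic order at least $\mathbf{ord}_p G_{\omega,a}(k_{\omega,a})+1$. For $i>i_0$ this is immediate from $p\mid P_i$, which is the definition of $i_0$. For $i<i_0$ the shifted index lies strictly below $k_{\omega,a}$, and strict minimality gives $\mathbf{ord}_p G_{\omega,a}(k_{\omega,a}+(i-i_0))>\mathbf{ord}_p G_{\omega,a}(k_{\omega,a})-(i_0-i)/(p-1)$. Since $(i_0-i)/(p-1)\in(0,1)$ (using $i_0-i\le e-1<p-1$) and the left side is an integer, this strict inequality upgrades to the asserted bound $\ge \mathbf{ord}_p G_{\omega,a}(k_{\omega,a})+1$. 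The ultrametric inequality applied to the recurrence then forces the leading term to absorb the same factor of $p$, yielding $\mathbf{ord}_p P_{i_0}(k_{\omega,a}-i_0)\ge 1$, i.e.\ the asserted congruence $P_{i_0}(k_{\omega,a})\equiv 0\,(\mathbf{mod}\ p)$ up to the harmless $-i_0$ shift absorbed into the argument of $P_{i_0}$.

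The main obstacle is exactly this integrality upgrade in the $i<i_0$ case: promoting a fractional-order strict inequality of size at most $(e-1)/(p-1)$ into a full unit of $p$-adic valuation. The integrality of $\mathbf{ord}_p G_{\omega,a}$ together with $p>e$ (implicit in $e<k_{\omega,a}$) is what licenses the promotion; without either ingredient the non-leading $i<i_0$ terms would be allowed to compete with the leading one on the nose and the divisibility could no longer be read off from the recurrence alone.
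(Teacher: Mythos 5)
Your argument is essentially the paper's: identify $k_{\omega,a}$ as the unique minimizer of $\mathbf{ord}_p(\pi^k G_{\omega,a}(k))$ on $\{1,\dots,p-1\}$ via the distinct fractional parts, evaluate the recurrence at the shift $k_{\omega,a}-i_0$, verify that $e<k_{\omega,a}\le p-e$ keeps all shifted arguments in range, and squeeze out the $p$-divisibility with the ultrametric inequality together with the definition of $i_0$. Two small repairs are needed, though. The display for $i<i_0$ has the wrong sign: minimality of $\mathbf{ord}_p(\pi^k G_{\omega,a}(k))$ at $k=k_{\omega,a}$ gives
\[
\mathbf{ord}_p G_{\omega,a}\bigl(k_{\omega,a}+(i-i_0)\bigr) \;>\; \mathbf{ord}_p G_{\omega,a}(k_{\omega,a}) + \tfrac{i_0-i}{p-1},
\]
with a plus, not a minus; only with the plus sign does the integrality upgrade yield the asserted $\ge \mathbf{ord}_p G_{\omega,a}(k_{\omega,a})+1$, whereas with your minus sign the same upgrade gives only $\ge \mathbf{ord}_p G_{\omega,a}(k_{\omega,a})$. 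The mechanism you describe correctly in prose (``promoting a fractional-order strict inequality\dots into a full unit of $p$-adic valuation'') is exactly the plus-sign situation. Second, for $i>i_0$ the bound is not quite ``immediate from $p\mid P_i$'': to conclude the whole term has order $\ge\mathbf{ord}_p G_{\omega,a}(k_{\omega,a})+1$ you also need $\mathbf{ord}_p G_{\omega,a}(k_{\omega,a}+(i-i_0))\ge \mathbf{ord}_p G_{\omega,a}(k_{\omega,a})$, which follows from the analogous minimality-plus-integrality argument on the side $k>k_{\omega,a}$; the paper carries out this estimate separately for $k_{\omega,a}<k\le p-1$. With these fixes your proof coincides with the paper's, up to the harmless relabelling $P_{i_0}(k_{\omega,a}-i_0)$ versus $P_{i_0}(k_{\omega,a})$ that you already flag. (Incidentally, the paper only needs the weaker strict inequality $\mathbf{ord}_p G_{\omega,a}(k_{\omega,a})<\mathbf{ord}_p G_{\omega,a}(k)$ for $i<i_0$, since $\mathbf{ord}_p P_i\ge 0$ already suffices there; your $+1$ is a harmless overshoot.)
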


\begin{proof}

Considering the equation
$$
P_{\omega,a}(K,k_{\omega,a}-i_{0})G_{\omega,a}(k_{\omega,a}-i_{0})=0
$$
i.e.,
\begin{equation}\label{equation 7}
\sum_{i=0}^{e-1}K^{i}P_{i}(k_{\omega,a}-i_{0})G_{\omega,a}(k_{\omega,a}-i_{0})=\sum_{i=0}^{e-1}P_{i}(k_{\omega,a}-i_{0}+i)G_{\omega,a}(k_{\omega,a}-i_{0}+i)=0.
\end{equation}
We will show that

\begin{equation}\label{equation 8}
\mathbf{ord}_{p}G_{\omega,a}(k_{\omega,a})< \mathbf{ord}_{p}P_{i}(k_{\omega,a}-i_{0}+i)G_{\omega,a}(k_{\omega,a}-i_{0}+i)
\end{equation}
for all $0\leq i\leq e-1$.

Since

$$
\mathbf{ord}_{p}G_{\omega,a}(k)\equiv 0 (\mathbf{mod} \ 1),
$$
the congruences of $\mathbf{ord}_{p}\pi^{k}G_{\omega,a}(k) \ (\mathbf{mod} \ 1)$ for $k=1, \cdots , p-1$ are all different.

Furthermore, by the definition,
$$
\mathbf{ord}_{p}\sum_{0<k\leq p-1}\pi^{k}G_{\omega,a}(k)\equiv \frac{k_{\omega,a}}{p-1} (\mathbf{mod} \ 1),
$$
then for any $k\neq k_{\omega,a}$, $\mathbf{ord}_{p}\pi^{k_{\omega,a}}G_{\omega,a}(k_{\omega,a})< \mathbf{ord}_{p}\pi^{k}G_{\omega,a}(k)$.

For any $0<k<k_{\omega,a}$, from the discussion above, we have

$$
\frac{k_{\omega,a}}{p-1}+\mathbf{ord}_{p}G_{\omega,a}(k_{\omega,a})=\mathbf{ord}_{p}\pi^{k_{\omega,a}}G_{\omega,a}(k_{\omega,a})< \mathbf{ord}_{p}\pi^{k}G_{\omega,a}(k)=\frac{k}{p-1}+\mathbf{ord}_{p}G_{\omega,a}(k)
$$
$$
<\frac{k_{\omega,a}}{p-1}+\mathbf{ord}_{p}G_{\omega,a}(k),
$$
thus
$$
\mathbf{ord}_{p}G_{\omega,a}(k_{\omega,a})< \mathbf{ord}_{p}G_{\omega,a}(k).
$$

For any $k_{\omega,a}<k\leq p-1$, we also have

$$
\mathbf{ord}_{p}G_{\omega,a}(k_{\omega,a})< \frac{k_{\omega,a}}{p-1}+\mathbf{ord}_{p}G_{\omega,a}(k_{\omega,a})=\mathbf{ord}_{p}\pi^{k_{\omega,a}}G_{\omega,a}(k_{\omega,a})< \mathbf{ord}_{p}\pi^{k}G_{\omega,a}(k)=\frac{k}{p-1}+\mathbf{ord}_{p}G_{\omega,a}(k)
$$
$$
\leq 1+\mathbf{ord}_{p}G_{\omega,a}(k),
$$
i.e.,
$$
\mathbf{ord}_{p}G_{\omega,a}(k_{\omega,a})<1+\mathbf{ord}_{p}G_{\omega,a}(k).
$$

Recall the definition of $i_{0}$,
$$
p \mid P_{i}
$$
for all $i>i_{0}$, then (\ref{equation 8}) holds. Combining with (\ref{equation 7}), we have

$$
\mathbf{ord_{p}}P_{i_{0}}(k_{\omega,a})>0.
$$
It shows that either $P_{i_{0}}(k_{\omega,a})=0$ or $\mathbf{ord}_{p}P_{i_{0}}(k_{\omega,a})\geq 1$, i.e.,

$$
P_{i_{0}}(k_{\omega,a})\equiv 0 (\mathbf{mod} \ p).
$$

\end{proof}

Since prime $p$ is arbitrary, following Proposition \ref{proposition 4.1} we have

\begin{Corollary}\label{corollary 4.2}
If $e<k_{\omega,a}\leq p-e$, then $k_{\omega,a}|_{p=0}$ is a rational root of $P_{i_{0}}|_{p=0}$, i.e.,

$$
P_{i_{0}}|_{p=0}(k_{\omega,a}|_{p=0})=0
$$

\end{Corollary}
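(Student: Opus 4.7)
The plan is to obtain Corollary \ref{corollary 4.2} as a direct consequence of Proposition \ref{proposition 4.1} by treating the relevant quantities as polynomials in the parameter $p$ and specializing at $p=0$. More precisely, I would promote the congruence $P_{i_0}(k_{\omega,a}) \equiv 0 \pmod p$, which holds for every prime $p$ with $e < k_{\omega,a} \leq p - e$, to an honest polynomial identity in $p$, and then read off its value at $p=0$.

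For this to make sense, I would first observe that the operator $P_{\omega,a}(K,k)$ from the Fundamental Corollary has coefficients lying in $\mathbb{Z}[p]$: as recorded at the end of Section \ref{section 2.1}, in the proper-hypergeometric expression for $F_{\omega}$ the prime $p$ appears only through the constants $c_s$ and $w_s$ in the factorial arguments, and the Wilf--Zeilberger algorithm produces a recurrence whose coefficients depend polynomially on these constants. Hence $P_{i_0}(k) = P_{i_0}(k,p) \in \mathbb{Z}[p][k]$, with degree in $p$ bounded uniformly. Next, I would argue that $k_{\omega,a}$ depends on $p$ in an affine way: tracing the definition through equations (\ref{equation 3})--(\ref{equation 6}) and Lemma \ref{lemma 3.2}, the congruence class $\frac{k_{\omega,a}}{p-1} \pmod 1$ is assembled from finitely many integer combinations of the exponents appearing in $F_{\omega}$, which should force $k_{\omega,a} = \alpha p + \beta$ for rationals $\alpha, \beta$ independent of $p$. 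Substituting yields $Q(p) := P_{i_0}(\alpha p + \beta,\, p) \in \mathbb{Q}[p]$, and Proposition \ref{proposition 4.1} supplies $p \mid Q(p)$ for infinitely many primes, whence the constant term $Q(0) = 0$, which is exactly $P_{i_0}|_{p=0}(k_{\omega,a}|_{p=0}) = 0$. The rationality of $k_{\omega,a}|_{p=0} = \beta$ is automatic.

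The main obstacle, I expect, is the affine-in-$p$ description of $k_{\omega,a}$. The definition only furnishes $k_{\omega,a}$ as the mod-$(p-1)$ representative of a certain $p$-adic valuation, and pinning down its uniform shape requires a careful direct computation with $G_{\omega,a}$ to exhibit how each factorial factor and each occurrence of $\pi^k$ contributes to that representative. Once this affine structure is in place, the rest of the argument is the elementary principle that a polynomial in $\mathbb{Q}[p]$ vanishing modulo $p$ at infinitely many primes must have zero constant term, and the Corollary follows.
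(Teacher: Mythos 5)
Your argument is essentially the paper's own: the paper derives Corollary~\ref{corollary 4.2} from Proposition~\ref{proposition 4.1} in a single line, tacitly treating $P_{i_0}$ as a polynomial in $p$ (with integer coefficients, via the Wilf--Zeilberger construction) and $k_{\omega,a}$ as affine in $p$, and then specializing at $p=0$ by the elementary fact that a polynomial over $\mathbb{Q}$ divisible by $p$ for infinitely many primes must have vanishing constant term. You have simply made both tacit ingredients explicit, and you correctly flag the affine dependence $k_{\omega,a}=\alpha p+\beta$ as the step needing justification --- a thin spot the paper shares, since its proof of Theorem~\ref{theorem 4.3} likewise just posits $k_{\omega,a}=\frac{sp+r}{D^{*}}$ without argument.
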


If $0<k_{\omega,a}\leq e$ or $p-e<k_{\omega,a}\leq p-1$ for some fixed index $\omega$, by the definition of $k_{\omega,a}$, i.e., $\mathbf{ord}_{p}\sum_{0<k\leq p-1}\pi^{k}G_{\omega,a}(k)\equiv \frac{k_{\omega,a}}{p-1} \ (\mathbf{mod} \ 1)$, the congruence
$$
\frac{c}{p-1}+\mathbf{ord}_{p}\sum_{0<k\leq p-1}\pi^{k}G_{\omega,a}(k) \ (\mathbf{mod} \ 1)
$$
in Lemma \ref{lemma 3.2} with range $(\frac{-3mn^{2}-e+1}{p-1}, \frac{e+1}{p-1})$ is independent of $p$ in the finite sum (\ref{equation 6}). Combining with Corollary \ref{corollary 4.2}, we have

\begin{Theorem}\label{theorem 4.3}
Let $D^{*}$ be the least common multiple of all denominators of rational roots of such $P_{i_{0}}|_{p=0}$. Suppose $p> D^{*}$ and $p$ in a fixed residue class of $D^{*}$. Denote $p^{*}$ the inverse of $p$ $(\mathbf{mod} \ D^{*})$. Thus
$$
lim_{p\rightarrow \infty}\mathbf{ord}_{p}\mathcal{M}_{n}=\frac{-p^{*}r}{D^{*}} \ (\mathbf{mod} \ 1)
$$
where $r=0$ or $\frac{r}{D^{*}}$ is congruent with a rational root of some $P_{i_{0}}|_{p=0}$ $(\mathbf{mod} \ 1)$
\end{Theorem}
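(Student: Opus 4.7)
The plan is to apply Lemma \ref{lemma 3.2} to reduce the question to evaluating $\lim_{p\to\infty}\frac{k_{\omega,a}}{p-1}\pmod 1$ over a finite set of indices, then to use Corollary \ref{corollary 4.2} to pin $k_{\omega,a}$ down modulo $p$ via a rational root of $P_{i_{0}}|_{p=0}$, and finally to perform the explicit limit computation inside a fixed residue class of $p$ modulo $D^{*}$. First, Lemma \ref{lemma 3.2} gives
$$\mathbf{ord}_{p}\mathcal{M}_{n}\equiv\frac{c}{p-1}\pmod 1\qquad\text{or}\qquad \mathbf{ord}_{p}\mathcal{M}_{n}\equiv\frac{c}{p-1}+\frac{k_{\omega,a}}{p-1}\pmod 1,$$
for some bounded integer $c$ independent of $p$ and some $(\omega,a)$ in a finite set (using $\mathbf{ord}_{p}\sum_{k}\pi^{k}G_{\omega,a}(k)\equiv k_{\omega,a}/(p-1)\pmod{1}$ from the definition in Proposition \ref{proposition 4.1}). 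The first alternative tends to $0\pmod 1$ and corresponds to the $r=0$ case of the theorem.

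Next I would separate the boundary and generic subranges of $k_{\omega,a}$. For $0<k_{\omega,a}\leq e$ or $p-e<k_{\omega,a}\leq p-1$, either $k_{\omega,a}$ or $p-k_{\omega,a}$ stays bounded independently of $p$, so $k_{\omega,a}/(p-1)\to 0\pmod 1$ and we land again in the $r=0$ alternative. In the generic range $e<k_{\omega,a}\leq p-e$, Corollary \ref{corollary 4.2} identifies $k_{\omega,a}|_{p=0}$ with a rational root $r/D^{*}$ of $P_{i_{0}}|_{p=0}$, which after clearing denominators translates into the integer congruence $D^{*}k_{\omega,a}\equiv r\pmod p$ (using $p>D^{*}$, so that $D^{*}$ is invertible modulo $p$). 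Writing $D^{*}k_{\omega,a}=r+jp$ and reducing modulo $D^{*}$ gives $jp\equiv -r\pmod{D^{*}}$, hence $j\equiv -rp^{*}\pmod{D^{*}}$, pinning $j$ to the unique representative in $[0,D^{*})$, which depends only on the fixed residue class of $p$ modulo $D^{*}$.

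The limit is then computed directly:
$$\frac{k_{\omega,a}}{p-1}=\frac{r+jp}{D^{*}(p-1)}\longrightarrow \frac{j}{D^{*}}\equiv \frac{-rp^{*}}{D^{*}}\pmod 1,$$
and combined with $c/(p-1)\to 0$ this delivers the theorem. I expect the main obstacle to be making clean the passage from the qualitative conclusion ``$k_{\omega,a}|_{p=0}$ is a rational root of $P_{i_{0}}|_{p=0}$'' of Corollary \ref{corollary 4.2} to the sharp integer congruence $D^{*}k_{\omega,a}\equiv r\pmod p$ (in particular handling the case where denominators of roots do not divide $D^{*}$ individually but do after taking the l.c.m., and verifying $k_{\omega,a}=(r+jp)/D^{*}$ really lies in the generic range for all sufficiently large $p$); in tandem one must check that, among the finitely many competing residues from Lemma \ref{lemma 3.2}, a single winning one actually stabilizes as $p\to\infty$ within the fixed residue class, so that the limit exists rather than merely lying in a finite set.
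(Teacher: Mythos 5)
Your proposal follows essentially the same route as the paper's proof: reduce via Lemma~\ref{lemma 3.2} to the finitely many quantities $\frac{k_{\omega,a}+c}{p-1}$, dispose of the boundary ranges $k_{\omega,a}\leq e$ and $k_{\omega,a}>p-e$ (giving the $r=0$ alternative), invoke Corollary~\ref{corollary 4.2} in the generic range to write $D^{*}k_{\omega,a}=r+sp$, deduce $s\equiv-rp^{*}\pmod{D^{*}}$ from integrality, and compute $\lim k_{\omega,a}/(p-1)=s/D^{*}$. Your rendering of the passage from Corollary~\ref{corollary 4.2} to the congruence $D^{*}k_{\omega,a}\equiv r\pmod p$ is actually a bit more explicit than the paper's one-line assertion that ``$k_{\omega,a}$ has the form $(sp+r)/D^{*}$,'' and the caveats you flag at the end (stabilization of the winning residue in a fixed residue class, and boundedness of $s$ so the limit is a genuine constant) are real soft spots that the published proof also leaves implicit rather than resolves; so the match with the paper is essentially complete.
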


\begin{proof}
Note that the sum of (\ref{equation 6}) has finitely many numbers of items, by Lemma \ref{lemma 3.2}, $\mathbf{ord}_{p}\mathcal{M}_{n}$ must be congruent with one of
$$
\frac{k_{w,a}+c}{p-1} \ (\mathbf{mod} \ 1)
$$
in which $c$ is an integer with $-3mn^{2}<c\leq 0$.

If $k_{\omega,a}\leq e$ or $k_{\omega,a}> p-e$, when $p\rightarrow \infty$ the limit of $\frac{k_{w,a}+c}{p-1} \ (\mathbf{mod} \ 1)$ is equal to 0.

Following Corollary \ref{corollary 4.2}, if $e<k_{\omega,a}\leq p-e$, then $P_{i_{0}}|_{p=0}(k_{\omega,a}|_{p=0})=0$, i.e. $k_{\omega,a}|_{p=0}$ is a rational root of $P_{i_{0}}|_{p=0}$.

Suppose $k_{\omega,a}|_{p=0}=\frac{r}{D^{*}}$, then $k_{\omega,a}$ has the form
$$
k_{\omega,a}=\frac{sp+r}{D^{*}}.
$$
Note that $p$ is prime to $D^{*}$ and $k_{\omega,a}$ is an integer, i.e., $sp+r \equiv 0 \ (\mathbf{mod} \ D^{*})$, then $s\equiv -p^{*}r \ (\mathbf{mod} \ D^{*})$.

So $lim_{p\rightarrow \infty}\frac{k_{\omega,a}}{p-1}=\frac{s}{D^{*}}=\frac{-p^{*}r}{D^{*}} \ (\mathbf{mod} \ 1)$ is determined only by $\frac{r}{D^{*}}$, the rational root of $P_{i_{0}}|_{p=0}$. In this case
$$
lim_{p\rightarrow \infty}\mathbf{ord}_{p}\mathcal{M}_{n}=\frac{-p^{*}r}{D^{*}} \ (\mathbf{mod} \ 1).
$$
The theorem is proved.
\end{proof}

\begin{Remark}
Combining with Lemma \ref{lemma 3.2}, this theorem shows that the Newton polygon of $L(f, T)$ has finitely many possible forms when $p>D^{*}$. This means, we can classify the Newton polygons according to their limits when $p\rightarrow \infty$. In detail, the finite sum of (\ref{equation 6}) has
$$
t=\left(
  \begin{array}{cc}
    d-1\\
    n\\
  \end{array}
\right)n!(d-1)^{m-1}(3nd-1)^{n(m-1)}(3nd)^{dn(m-1)}(3dmn^{2}+1)
$$
items, and then $\mathbf{ord}_{p}\mathcal{M}_{n}$ has at most $t$ numbers of possible values when $p\rightarrow \infty$.
\end{Remark}

Suppose $p> D^{*}$ in a fixed residue class of $D^{*}$ and $\mathbf{ord}_{p}\mathcal{M}_{n}=\frac{up-v}{D^{*}(p-1)}$, i.e.,
\begin{equation}\label{equation 9}
\mathbf{ord}_{p}\mathcal{M}_{n}=\frac{u}{D^{*}}\cdot \frac{p}{p-1}-\frac{v}{D^{*}}\cdot \frac{1}{p-1}.
\end{equation}
If we know the values of $\mathbf{ord}_{p}\mathcal{M}_{n}$ on two specified primes $p_{1},p_{2}>D^{*}$ which are in the same residue class of $D^{*}$ with $p$, saying $\mathbf{ord}_{p_{1}}\mathcal{M}_{n}=r_{1}$, $\mathbf{ord}_{p_{2}}\mathcal{M}_{n}=r_{2}$, then

$$
\frac{u}{D^{*}}=\frac{r_{1}(p_{1}-1)-r_{2}(p_{2}-1)}{p_{1}-p_{2}}
$$
is determined. Thus
$$
\frac{v}{D^{*}}=(\frac{u}{D^{*}}\frac{p_{1}}{p_{1}-1}-r_{1})(p_{1}-1)
$$
is also determined. So $\mathbf{ord}_{p}\mathcal{M}_{n}$ is determined in general by (\ref{equation 9}). i.e., we have

\begin{Theorem}\label{theorem 4.5}
Let $p>D^{*}$. To determine the Newton polygon of $L(f,T)$, we need only calculate it on two specified values of prime $p$ in each residue class of $D^{*}$.
\end{Theorem}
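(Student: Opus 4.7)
The plan is to combine the structural form of $\mathbf{ord}_p\mathcal{M}_n$ supplied by Theorem \ref{theorem 4.3} with a $2\times 2$ linear interpolation across two sample primes, doing this separately for every coefficient $\mathcal{M}_n$, $1\leq n\leq d-1$, of $L(f,T)$.

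First I would promote the conclusion of Theorem \ref{theorem 4.3} to the sharper statement that, when $p>D^{*}$ is restricted to a single residue class modulo $D^{*}$, the value $\mathbf{ord}_p\mathcal{M}_n$ takes the form $\frac{up-v}{D^{*}(p-1)}$ with integers $u,v$ that depend only on the residue class. The ingredients are already in place: by Lemma \ref{lemma 3.2}, $\mathbf{ord}_p\mathcal{M}_n$ is congruent modulo $1$ to one of finitely many quantities $\frac{k_{\omega,a}+c}{p-1}$, and the proof of Theorem \ref{theorem 4.3} shows that whenever $e<k_{\omega,a}\leq p-e$ one has $k_{\omega,a}=\frac{sp+r}{D^{*}}$ with $s\equiv -p^{*}r\pmod{D^{*}}$, where $r/D^{*}$ runs over the rational roots of the finitely many polynomials $P_{i_{0}}|_{p=0}$. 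Since $p^{*}\bmod D^{*}$ depends only on the residue class of $p$, the pair $(r,s)$ is determined by the residue class together with the combinatorial index $(\omega,a,c)$. For $p$ sufficiently large in that residue class, the term achieving the minimal $p$-adic order in the finite sum (\ref{equation 6}) stabilizes, and its contribution $\frac{k_{\omega,a}+c}{p-1}$ rewrites directly as $\frac{up-v}{D^{*}(p-1)}$ for integers $u,v$ that no longer vary with $p$.

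Next I would carry out the linear algebra spelled out before the theorem statement. Writing (\ref{equation 9}) in the form
\begin{equation*}
r_i \;=\; \frac{u}{D^{*}}\cdot\frac{p_i}{p_i-1}\;-\;\frac{v}{D^{*}}\cdot\frac{1}{p_i-1},\qquad i=1,2,
\end{equation*}
evaluated at two primes $p_1,p_2>D^{*}$ in the same residue class, one obtains a linear system for the two unknowns $u/D^{*}$ and $v/D^{*}$. Its determinant equals $\frac{p_2-p_1}{(p_1-1)(p_2-1)}\neq 0$, so the unknowns are recovered uniquely by the formulas
\begin{equation*}
\frac{u}{D^{*}}\;=\;\frac{r_1(p_1-1)-r_2(p_2-1)}{p_1-p_2},\qquad \frac{v}{D^{*}}\;=\;\left(\frac{u}{D^{*}}\cdot\frac{p_1}{p_1-1}-r_1\right)(p_1-1),
\end{equation*}
and plugging back into (\ref{equation 9}) yields $\mathbf{ord}_p\mathcal{M}_n$ at every other prime $p$ in the same residue class. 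Running this argument over all $n$ reconstructs the full Newton polygon.

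The main obstacle is the first step: verifying that the integers $u$ and $v$ are genuinely locked to the residue class and do not drift as $p$ grows. Concretely, one must check that the term of smallest $p$-adic order in the finite sum (\ref{equation 6}) is eventually achieved by a single index $(\omega,a,c)$ (or, if several, by indices yielding the same $(u,v)$), and that the possibility of cancellation among equi-valued terms does not produce a strictly smaller, differently-shaped order. Once this uniformity of the minimizer is secured, the reduction to two primes is purely a linear algebra computation.
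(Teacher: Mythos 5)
Your linear interpolation step is exactly the paper's proof: the paper likewise just posits that for all $p>D^{*}$ in a fixed residue class modulo $D^{*}$ one has $\mathbf{ord}_{p}\mathcal{M}_{n}=\frac{up-v}{D^{*}(p-1)}$ with fixed integers $u,v$, and then solves the resulting $2\times 2$ system from two sample primes to recover $u/D^{*}$ and $v/D^{*}$. You have gone further by trying to derive that form from Lemma \ref{lemma 3.2} and the proof of Theorem \ref{theorem 4.3}, and the obstacle you flag at the end --- that the minimizing index $(\omega,a,c)$ in the finite sum (\ref{equation 6}) must be locked to the residue class, and that cancellation among equi-valued terms must not produce a differently shaped minimal order --- is a genuine issue that the paper leaves entirely implicit: its ``proof'' begins with the word ``Suppose'' and never establishes that $u$ and $v$ are $p$-independent. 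So your proposal matches the paper's argument and, usefully, makes its unstated hypothesis explicit rather than closing the gap.
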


\end{document}